\newtheorem{theorem}{Theorem}[section]
\newtheorem{lemma}[theorem]{Lemma}
\newtheorem{corollary}[theorem]{Corollary}
\newtheorem{proposition}[theorem]{Proposition}
\newtheorem{example}[theorem]{Example}
\newtheorem{remark}[theorem]{Remark}
\newtheorem{definition}[theorem]{Definition}
\def\bit{\begin{itemize}}
\def\eit{\end{itemize}}
\def\bc{\begin{center}}
\def\ec{\end{center}}
\def\bthm{\begin{theorem}}
\def\ethm{\end{theorem}}
\def\bcor{\begin{corollary}}
\def\ecor{\end{corollary}}
\def\bprop{\begin{proposition}}
\def\eprop{\end{proposition}}
\def\blem{\begin{lemma}}
\def\elem{\end{lemma}}
\def\brem{\begin{remark}}
\def\erem{\end{remark}}
\def\bdes{\begin{description}}
\def\edes{\end{description}}
\def\beq{\begin{equation}}
\def\eeq{\end{equation}}
\def\ben{\begin{enumerate}}
\def\een{\end{enumerate}}
\def\beqar{\begin{eqnarray}}
\def\eeqar{\end{eqnarray}}
\def\beqarr{\begin{eqnarray*}}
\def\eeqarr{\end{eqnarray*}}
\def\RR{{\mathbb R}}  
\def\EE{{\mathbb E}}
\def\PP{{\mathbb P}}
\def\cA{\mathcal{A}} \def\cB{\mathcal{B}} 
\def\cD{\mathcal{D}}  \def\cF{\mathcal{F}}
\def\cP{\mathcal{P}}  
\def\cS{\mathcal{S}}
\def\part{\partial}
\def\d#1dt{\frac{d#1}{dt}}    
\begin{document}
\title{Application of stochastic flows to the sticky Brownian motion equation}
\maketitle
\begin{center}
\renewcommand{\thefootnote}{(\arabic{footnote})}
  \scshape Hatem Hajri \footnote{Laboratoire de l'Int\'egration du Mat\'eriau au Syst\`eme, Bordeaux. Email:
Hatem.Hajri@ims-bordeaux.fr}
   Mine Caglar \footnote{ Koc University, Istanbul. Email:
mcaglar@ku.edu.tr \\ The research of M. Caglar is supported by TUBITAK Project No. 115F086}
 Marc Arnaudon \footnote{ Institut de Math\'ematiques de Bordeaux, Bordeaux. Email:
marc.arnaudon@math.u-bordeaux.fr}\setcounter{footnote}{0}
\end{center}

\begin{abstract}
We show how the theory of stochastic flows allows to recover in an elementary way a well known result of Warren on the sticky Brownian motion equation.
\end{abstract}







\section{Introduction}
A $\theta$-sticky Brownian on the half line $[0,\infty)$ is a diffusion with generator
$$(\mathcal A f)(x)=\begin{cases} \frac{1}{2} f''(x) &\mbox{if } x>0 \\
 \theta f'(0+) & \mbox{if } x=0\end{cases} $$
and domain
$$\cD(\cA) = \bigg\{f\in C^2(0,\infty):f\in C_0([0, \infty)), f'(0^+), $$
$$\ \ \ \ \ \ \ \ \ \ \ \ \ \ \ \ \ \ \ \ \ \ \ \ \ \ \ \ \ \ \  f''(0^+) \mbox{ exist}, f''(0^+) =2\theta f'(0^+), \lim_{x\rightarrow \infty} f''(x)=0 \bigg\}$$
where $\theta>0$ is the stickiness parameter. This is a special case of Feller one dimensional diffusions introduced by Feller by means of their infinitesimal generators \cite{MR0047886}. For comparison of the boundary condition $f''(0^+) =2\theta f'(0^+)$   with other examples, see \cite{MR3271518}. Sticky Brownian motion has an intermediate behavior, depending on $\theta$, between Brownian motion absorbed at $0$ and reflected Brownian motion. One possible path construction of a $\theta$-sticky Brownian motion $X$ started from $0$ consists in slowing down a reflected Brownian motion $R$ started from $0$ whenever it is at $0$ in the following way
$$X_t=R_{\text{inf}\{u : u + \frac{1}{\theta} L_u>t\}}$$
where $L_t=\lim_{\epsilon\rightarrow 0} \frac{1}{2\epsilon} \int_{0}^{t} 1_{\{0\le R_s\le \epsilon\}} ds$ is the local time of $R$ \cite{howit,MR0345224}. As a consequence of this construction, the amount of time spent at $0$ by $X$ up to $t$, $\int_{0}^{t} 1_{\{X_s = 0\}} ds$, has positive probability of being greater than $0$. More precisely, the following equality holds in law
$$\int_{0}^{t} 1_{\{X_s = 0\}} ds \overset{\text{law}}{=} \frac{|N|}{\theta} \sqrt{t+ \frac{N^2}{4\theta^4}} - \frac{N^2}{2\theta^2}$$
where $N\sim \mathcal N(0,1)$ (see Proposition 5 in \cite{howit}).

In this paper, we are interested in sticky Brownian motion as solution of the following stochastic differential equation
\begin{equation}\label{eq}
X_t = x + \int_{0}^{t} 1_{\{X_s>0\}} dW_s + \theta \int_{0}^{t} 1_{\{X_s = 0\}} ds
\end{equation}
driven by a standard Brownian motion $W$ and where $\theta>0$ is a given constant and $x\in[0,\infty)$ is a given initial condition.

It has been proved by Chitashvili \cite{MR1639096} that (\ref{eq}) has a weak solution $X$ which is a $\theta$-sticky Brownian motion started from $x$, the law of $(X,W)$ is unique but $X$ is not a strong solution to (\ref{eq}). Later on, Warren \cite{Warren1997} derived the following remarkable result describing the law of $X_t$ given $W$ (the form given here follows Theorem 2 \cite{MR1767999}).

\begin{theorem}\label{ty} When $X_0=0$, for all $t\ge 0$ and all measurable bounded $f$,
\[
\EE [f(X_t)|\cF^W]=G_f(W_t^+)
\]
where $W_t^+ = W_t - \min_{0\le u \leq t} W_u$ and $G_f(y)=\EE[f((y-T)^+)]$ with $T$ an exponential variable with mean $\frac{1}{2\theta}$.
\end{theorem}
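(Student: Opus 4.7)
The plan is to recast equation (\ref{eq}) in the framework of stochastic flows of kernels. Since (\ref{eq}) has uniqueness in law but admits no strong solution, by Le Jan--Raimond one associates to it a flow of kernels $(K_{s,t})_{0\le s\le t}$ driven by $W$, characterized by $\EE[f(X_t)\mid\cF^W]=K_{0,t}(X_0,f)$. The theorem is then equivalent to identifying $K_{0,t}(0,\cdot)$ as the image of the $\mathrm{Exp}(2\theta)$ law under the map $T\mapsto(W_t^+-T)^+$.

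To exploit the flow, first observe that for $y>0$ the SDE is pathwise unique up to the hitting time $\tau_y:=\inf\{s:W_s=-y\}$, so $X^y_s=y+W_s$ and hence $K_{0,s}(y,\cdot)=\delta_{y+W_s}$ on $\{s<\tau_y\}$. Using the cocycle property $K_{0,t}=K_{0,u}K_{u,t}$ at $u=\tau_y$, everything reduces to understanding the flow issued from $0$. For the latter, introduce the last visit to zero $g_t:=\sup\{s\le t:X_s=0\}$. Since $X_s>0$ on $(g_t,t]$, the SDE gives $dX_s=dW_s$ there, so $X_t=W_t-W_{g_t}$; the constraint $X\geq 0$ on $[g_t,t]$ then forces $W_{g_t}=\min_{g_t\le s\le t}W_s$, and consequently $\min_{0\le s\le t}W_s=\min_{0\le s\le g_t}W_s$. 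Setting $Y_t:=W_{g_t}-\min_{0\le s\le t}W_s\in[0,W_t^+]$, one obtains the clean identity $X_t=W_t^+-Y_t$. The theorem is therefore equivalent to showing that, conditionally on $\cF^W$, $Y_t\stackrel{\mathrm{law}}{=}T\wedge W_t^+$ with $T\sim\mathrm{Exp}(2\theta)$ independent of $W$.

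This final exponential identification is the heart of the matter. The candidate $G_f(y)=f(0)e^{-2\theta y}+2\theta\int_0^y f(y-s)e^{-2\theta s}\,ds$ satisfies the first-order ODE $G_f'+2\theta G_f=2\theta f$ with boundary value $G_f(0)=f(0)$. The plan is to apply It\^o's formula to $G_f(W_t^+)$: the matching boundary value $G_f(0)=f(0)$ annihilates the local-time contribution at $\{W^+=0\}$, so the resulting semimartingale decomposition can be tested against the forward equation for $K_{0,t}(0,f)$ produced by the flow. The main obstacle will be a rigorous use of the Markov and cocycle properties of the flow at the countably many returns of $X$ to zero to decouple the ``stickiness randomness'' (which governs when $X$ leaves $0$) from $W$, and to identify its conditional law as $\mathrm{Exp}(2\theta)$; once this decoupling is in hand, the theorem follows by direct substitution into the decomposition $X_t=W_t^+-Y_t$.
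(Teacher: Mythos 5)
Your setup parallels the paper's: both recast the problem via a flow of kernels $K$ driven by $W$, both reduce to the starting point $0$ using pathwise uniqueness before the hitting time of $0$, and both hinge on the function $G_f$ and an It\^o computation for $G_f(W_t^+)$. The decomposition $X_t=W_t^+-Y_t$ with $Y_t=W_{g_t}-\min_{0\le s\le t}W_s$ is a nice reformulation of the target. But there is a genuine gap exactly where you flag ``the main obstacle'': you never establish that the conditional law of $Y_t$ given $\cF^W$ is that of $T\wedge W_t^+$ with $T\sim\mathrm{Exp}(2\theta)$, nor do you supply a substitute for that identification. Saying the semimartingale decomposition of $G_f(W_t^+)$ ``can be tested against the forward equation for $K_{0,t}(0,f)$'' presupposes (i) that $\EE[f(X_t)\mid\cF^W]$ equals $K_{0,t}f(0)$ for some $\sigma(W)$-measurable flow solving the flow version of the SDE, and (ii) that such a Wiener solution is unique, so that matching two solutions of the same equation forces equality. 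Neither comes for free: (i) rests on the correspondence between flows solving the generalized equation and (compatible) weak solutions, and (ii) is a separate uniqueness statement, which the paper proves by a semigroup/coupling argument for two Wiener solutions driven by the same $W$. Without (ii), exhibiting one $\cF^W$-measurable solution of the forward equation does not identify the conditional law.

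Even granting (i)--(ii), the verification that $G_f(W_t^+)$ actually solves the flow equation needs more than the first-order ODE $G_f'+2\theta G_f=2\theta f$: one must check $G_{f'1_{(0,\infty)}}=(G_f)'$ and $G_{f''}=(G_f)''$, the second of which uses the boundary condition $f''(0^+)=2\theta f'(0^+)$ built into $\cD(\cA)$; and one must verify the cocycle property of the candidate kernel for all $s\le t\le u$ and all $x$, which reduces to the identity $G_f(\varphi_{t,u}(y))=G_{K_{t,u}f}(y)$. None of these computations appear in your sketch. Two smaller points: the local-time term in It\^o's formula for $g(W^+)$ is $g'(0^+)\,dL$, so what annihilates it is $(G_f)'(0^+)=0$ (this does follow from your ODE at $y=0$ together with $G_f(0)=f(0)$, but the reason as you state it is off); and the zero set of a sticky Brownian motion has positive Lebesgue measure, so an argument organized around ``countably many returns of $X$ to zero'' would need care --- the excursions away from $0$ are countable, but the zero set is not, and $g_t$ is not a stopping time.
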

This theorem shows, in particular, that $X$ cannot be a strong solution to (\ref{eq}). Subsequently, Warren \cite{MR1767999} described all couplings of solutions to (\ref{eq}) which leave the diagonal. Before going on, we mention the work of Engelbert and Peskir \cite{MR3271518} where a third proof of the non strong solvability of (\ref{eq}) and a two sided version of it can be found (see also \cite{EJP2350}).

A remarkable and attractive fact in Warren's conditional law identity is that it involves the well known and habitual process $W^+$ strong solution to
\begin{equation}\label{refl}
Y_t = W_t + L_t(Y)
\end{equation}
where $L_t(Y)=\lim_{\epsilon\rightarrow 0} \frac{1}{2\epsilon} \int_{0}^{t} 1_{\{0\le Y_s\le \epsilon\}} ds$. This raises the question whether there is a link between (\ref{eq}) and (\ref{refl}) explaining Theorem \ref{ty}.

In this paper, it is shown that stochastic flows of kernels \cite{MR2060298} provide an answer to the previous question. More precisely, define
\begin{equation}\label{ref}
\varphi_{s,t}(x) = (x + W_t - W_s) 1_{\{t\le \tau_s(x)\}} + W^+_{s,t} 1_{\{t>\tau_s(x)\}}
\end{equation}
where $\tau_s(x)=\inf\{u\ge s : x + W_u - W_s = 0\}$. Then $\varphi$ is a stochastic flow of maps which solves the flow version of (\ref{refl})
$$\varphi_{s,t}(x) = x + W_t - W_s + L_{s,t}(x)$$
where $L_{s,t}(x) = \lim_{\epsilon\rightarrow 0} \frac{1}{2\epsilon} \int_{s}^{t} 1_{\{0\le \varphi_{s,u}(x)\le \epsilon\}} du$. Now, let
\begin{equation}\label{ftt}
K_{s,t} f(x)= \begin{cases} f(\varphi_{s,t}(x)) &\mbox{if } s\le t\le \tau_s(x) \\
 G_f(\varphi_{s,t}(x)) & \mbox{if } t>\tau_s(x)\end{cases}
\end{equation}
Then $K$ is a stochastic flow of kernels which is a strong solution to the flow of kernels version of (\ref{eq}): for all $t\ge s, f\in\mathcal D(A)$ and $x\ge 0$ a.s.
\begin{equation}\label{trt}
K_{s,t} f(x) = f(x) + \int_{s}^{t} K_{s,u}(f'1_{(0,\infty)})(x) dW_u+ \frac{1}{2} \int_{s}^{t} K_{s,u} f''(x) du
\end{equation}
$K$, called the Wiener solution of (\ref{trt}) in \cite{MR2060298}, is characterized by being the unique (up to modification), strong solution of (\ref{trt}). This leads to Theorem (\ref{ty}) as the conditional law of $X_t$ given $W$ should coincide with $K_{0,t}(0,dy)$. Note that Equation (\ref{trt}) encapsulates the flow property (iv) of Definition \ref{defsfk} for $K$. Therefore, identifying the complete flow $K_{s,t}(x,dy)$ for every $s,t$ and $x$ is crucial in proving this result, not only for $s=0$ and $x=0$. The semigroup and Feller properties also play an important role in this fact. See \cite{MR2060298} for further discussion on $ \EE [f(X_t)|\cF^W]$ satisfying Equation (\ref{trt}). As a complete proof, we argue that (\ref{ftt}) being the Wiener flow satisfies this equation and the theorem follows for $G_f(W_t^+)$ in the special case $X_0=0$.
Section 2 gives details and proofs of the previously claimed facts. It can be remarked that the proofs only rely on the definition of stochastic flows with no additional results of the theory.
 The present paper provides, in particular, a direct application of stochastic flows to the study of weak solutions (see \cite{MR292205755} for another recent application). In Section 3, we conclude the paper with the Wiener chaos expansion of the conditional law.

\section{Proof of Theorem \ref{ty}} \label{2}
\subsection{The generalized sticky Brownian motion equation}
\leavevmode\par
\vspace{0.2cm}
Let us now recall the definition of stochastic flows from \cite{MR2060298}. In this definition $\cP(\RR_+)$ denotes the space of all probability measures on $\RR_+$ and $\cB(E)$ indicates the Borel $\sigma$-field of $E$.
\begin{definition} \label{defsfk}
A stochastic flow of kernels $K$ on $\RR_+$, defined on a probability space $(\Omega,\cA,\PP)$, is a family $(K_{s,t})_{s\le t}$ such that
\begin{enumerate}
\item For all $s\le t$, $K_{s,t}$ is a measurable mapping from $(\RR_+\times\Omega,\cB(\RR_+)\otimes\cA)$ to $(\cP(\RR_+),\cB(\cP(\RR_+)))$;
\item For all $h\in\RR$, $s\le t$, $K_{s+h,t+h}$ is distributed like $K_{s,t}$;
\item For all $s_1\le t_1\le\cdots \le s_n\le t_n$, the family $\{K_{s_i,t_i}, 1\le i\le n\}$ is independent;
\item For all $s\le t\le u$ and all $x\in \RR_+$, a.s. $K_{s,u}(x)=K_{s,t}K_{t,u}(x)$ and $K_{s,s}(x)=\delta_x$;
\item For all $f\in C_0(\RR_+)$, $s\le t$,
$$\lim_{(u,v)\to (s,t)}\sup_{x\in \RR_+}\EE[(K_{u,v}f(x)-K_{s,t}f(x))^2]=0;$$
\item For all $f\in C_0(\RR_+)$, $x\in \RR_+$, $s\le t$,
$$\lim_{y\to x}\EE[(K_{s,t}f(y)-K_{s,t}f(x))^2]=0;$$
\item For all $s\le t$, $f\in C_0(\RR_+)$, $\lim_{x\to\infty}\EE[(K_{s,t}f(x))^2]=0$.
\end{enumerate}
\end{definition}
We say that $\varphi$ is a stochastic flow of mappings on $\RR_+$ if $K_{s,t}(x)=\delta_{\varphi_{s,t}(x)}$ is a stochastic flow of kernels on $\RR_+$.

For $K$, a stochastic flow of kernels on $\RR_+$,
\begin{equation}\label{tg}
P^n_tf(x_1,\cdots,x_n)=\mathbb E\left[\int_{\RR_+^n} f(y_1,\cdots,y_n)K_{0,t}(x_1,dy_1)\cdots K_{0,t}(x_n,dy_n)\right]
\end{equation}
defines a Feller semigroup on $\RR_+^n$. Moreover $(P^n)_{n\ge 1}$ is a compatible family (in a sense explained in \cite{MR2060298}) of Feller semigroups acting respectively on $C_0(\RR_+^n)$ that uniquely characterize  the law of $K$. Conversely, it has been proved in \cite{MR2060298} that to each family of compatible Feller semigroups $(P^n)_{n\ge 1}$ is associated a (unique in law)  stochastic flow of kernels such that (\ref{tg}) holds for every $n\ge 1$.
\begin{definition}(Real white noise)
A family $(W_{s,t})_{s\le t}$ is called a real white noise if there exists a Brownian motion on the real line  $(W_t)_{t\in\RR}$, that is $(W_t)_{t\geq 0}$ and $(W_{-t})_{t\geq 0}$ are two independent standard Brownian motions such that for all $s\le t$, $W_{s,t}=W_t-W_s$ (in particular, when $t\ge 0$, $W_t=W_{0,t}$ and $W_{-t}=-W_{-t,0}$).
\end{definition}
For a family of random variables $Z=(Z_{s,t})_{s\le t}$, define $\mathcal F^Z_{s,t}=\sigma(Z_{u,v}, s\leq u\leq v\leq t)$ for all $s\le t$.


\begin{definition}\label{def}
Let $K$ be a stochastic flow of kernels and $W$ be a real white noise defined on the same probability space. We say that $(K,W)$ is a (generalized) solution of the sticky equation if for all $f\in \cD(\cA)$, $t\ge s$ and $x\in{\mathbb R}_+$ a.s.
$$K_{s,t}f(x)=f(x) + \int_{s}^{t} K_{s,u} (f'1_{(0,\infty)})(x) dW_u + \frac{1}{2} \int_{s}^{t} K_{s,u} f''(x) du $$
\end{definition}
Let us explain the link between this equation and the original sticky equation (\ref{eq}). We start with the following
\begin{lemma} \label{filt}
If $(K,W)$ is a solution of the generalized sticky equation, then
\[
 K_{s,t} f(x)=   f(\varphi_{s,t}(x)) \quad \mbox{if } \quad s\le t\le \tau_s(x)
\]
and in particular, $\mathcal F^W_{s,t}\subset\mathcal F^K_{s,t}$ for all $s\le t$.
\end{lemma}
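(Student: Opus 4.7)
The plan is to show that on the event $\{t \le \tau_s(x)\}$ the random kernel $K_{s,t}(x,\cdot)$ equals $\delta_{x + W_t - W_s} = \delta_{\varphi_{s,t}(x)}$ (using \eqref{ref}), which gives the desired identity $K_{s,t}f(x) = f(\varphi_{s,t}(x))$ for every measurable $f$. The trivial case $x = 0$ follows from $\tau_s(0) = s$ and $K_{s,s}(0) = \delta_0$ (Definition~\ref{defsfk}(iv)), so I fix $x > 0$. For $f \in C_c^\infty((0,\infty)) \subset \cD(\cA)$ (where $\cA f = \tfrac12 f''$), the generalized sticky equation becomes
\[
K_{s,u}f(x) = f(x) + \int_s^u K_{s,v}f'(x)\, dW_v + \tfrac12 \int_s^u K_{s,v}f''(x)\, dv,
\]
while Itô's formula applied to $f(x + W_u - W_s)$, valid on $\{u \le \tau_s(x)\}$ since the argument stays in $(0,\infty)$, gives the analogous expression with $K_{s,v}f^{(k)}(x)$ replaced by $f^{(k)}(x + W_v - W_s)$. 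Subtracting and setting $\Delta^g(u) := K_{s,u}g(x) - g(x + W_u - W_s)$ yields the recursion
\[
\Delta^f(u) = \int_s^u \Delta^{f'}(v)\, dW_v + \tfrac12 \int_s^u \Delta^{f''}(v)\, dv, \qquad s \le u \le \tau_s(x).
\]

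To close this hierarchy I would test against exponentials. Pick $f_\lambda(y) = e^{\lambda y}\chi_{\epsilon,R}(y)$, where $\chi_{\epsilon,R}$ is a smooth cut-off equal to $1$ on $[\epsilon, R] \subset (0,\infty)$ and supported slightly larger, so that $f_\lambda \in \cD(\cA)$ and, on $[\epsilon, R]$, $f_\lambda^{(k)} = \lambda^k f_\lambda$. Stopping at $\sigma_{\epsilon,R} = \inf\{u \ge s : x + W_u - W_s \notin [\epsilon, R]\} \wedge \tau_s(x)$, the right-hand side of the SDE for $F(u) := K_{s,u}f_\lambda(x)$ collapses—provided the support of $K_{s,v}(x,\cdot)$ stays in $[\epsilon, R]$—to $\lambda F\, dW_u + \tfrac{\lambda^2}{2} F\, du$, a closed linear SDE with unique solution $e^{\lambda(x + W_u - W_s)}$. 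The support property must be established jointly with the identity, for instance by iterating the recursion for $\Delta$ using the a priori bound $|\Delta^{f^{(k)}}(v)| \le 2\|f^{(k)}\|_\infty$ and the standard $O(T^n/\sqrt{n!})$ decay of $n$-fold iterated Itô integrals over $[s, s+T]$. Since $\{e^{\lambda\cdot}\}_{\lambda \in \RR}$ is separating for finite measures on $[\epsilon, R]$, one concludes $K_{s,u}(x,\cdot) = \delta_{x + W_u - W_s}$ on $\{u \le \sigma_{\epsilon, R}\}$; letting $\epsilon \downarrow 0$, $R \uparrow \infty$ gives the same identity on $\{u \le \tau_s(x)\}$, hence the first assertion of the lemma.

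For the inclusion $\mathcal{F}^W_{s,t} \subset \mathcal{F}^K_{s,t}$, apply the identity above to $h(y) = e^{-y}$ (bounded and continuous on $[0,\infty)$): on $\{t \le \tau_s(x)\}$ the $\mathcal{F}^K_{s,t}$-measurable random variable $\Psi_x := -\log\int e^{-y}\, K_{s,t}(x, dy) - x$ equals $W_t - W_s$. As $x \to \infty$, $\PP(\tau_s(x) \ge t) \to 1$, so $\Psi_x \to W_t - W_s$ in probability, and being a probability-limit of $\mathcal{F}^K_{s,t}$-measurable variables, $W_t - W_s$ is itself $\mathcal{F}^K_{s,t}$-measurable, proving $\mathcal{F}^W_{s,t} \subset \mathcal{F}^K_{s,t}$. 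The principal obstacle I anticipate is the bootstrap step in the second paragraph: the identity $K_{s,v}f_\lambda'(x) = \lambda K_{s,v}f_\lambda(x)$ holds only when the support of $K_{s,v}(x,\cdot)$ lies in $[\epsilon, R]$, which is exactly what we are trying to prove, so this must be handled by iterating the recursion for $\Delta$ before reducing to the closed SDE—alternatively, one can perform a direct Wiener chaos comparison in which the iterated Itô expansion of $K_{s,u}f(x)$ produced by the SDE and that of $f(x + W_u - W_s)$ produced by Itô's formula agree at each order with coefficient $f^{(k)}(x)$ times the same iterated integral in $(dW_v, dv)$.
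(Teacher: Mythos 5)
Your overall strategy matches the paper's: show that $K_{s,\cdot}(x)$ coincides with the deterministic flow $\delta_{x+W_{\cdot}-W_s}$ up to the hitting time of $0$, then recover $W_t-W_s$ from $K$ by letting $x\to\infty$ (your last paragraph is a correct implementation of that second step, and is essentially what the paper means by ``$x$ arbitrarily distant from $0$''). The gap is in the first step, and it is precisely the obstacle you flag yourself --- but neither of your proposed repairs can close it. Note that for $f\in C_c^\infty((0,\infty))$ one has $f'1_{(0,\infty)}=f'$ on all of $[0,\infty)$, so your recursion $\Delta^f(u)=\int_s^u\Delta^{f'}(v)\,dW_v+\tfrac12\int_s^u\Delta^{f''}(v)\,dv$ is valid for \emph{every} $u\ge s$, not only on $\{u\le\tau_s(x)\}$, and the class $C_c^\infty((0,\infty))$ is stable under differentiation, so the iteration can be run without any reference to $\tau_s(x)$. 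Hence any closure of the hierarchy that uses only the global bounds $|\Delta^{f^{(k)}}(v)|\le 2\|f^{(k)}\|_\infty$ together with the $T^{n/2}/\sqrt{n!}$ decay of iterated integrals would prove $K_{s,u}f(x)=f(x+W_u-W_s)$ for \emph{all} $u$, which is false past $\tau_s(x)$ (the one-point motion is sticky, not Brownian). So that route is provably incapable of working; concretely, the derivatives of a compactly supported cutoff grow faster than any $C^m m!$ along a subsequence (non-analyticity), so $\|f_\lambda^{(2n)}\|_\infty$ swamps the $1/\sqrt{n!}$ gain. The same objection applies to the ``Wiener chaos comparison'' variant, and the exponential/closed-SDE route is, as you concede, circular: the support property of $K_{s,v}(x,\cdot)$ \emph{is} the statement being proved.

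The restriction to $u\le\tau_s(x)$ must therefore enter as genuine probabilistic input rather than through sup-norm estimates. The paper handles this by localizing at level $\epsilon\in\,]0,x[$ and invoking, essentially verbatim, the proof of Lemma 3.1 of Le Jan--Raimond (\emph{Flows associated to Tanaka's SDE}); a self-contained way to supply the missing input is through the one- and two-point motions of the flow. Coupled with the driving noise, these are weak solutions of (\ref{eq}) driven by the \emph{same} Brownian motion $B$, and a pathwise argument (before the first zero all indicators $1_{\{X_v>0\}}$ equal $1$, so $X_u=x+B_u-B_s$ there, and the first zero of $X$ is the first zero of $x+B_{\cdot}-B_s$) shows each coordinate equals $x+B_u-B_s$ up to $\tau_s(x)$. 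Applying this to the two-point motion started from $(x,x)$ yields
\[
\EE\Bigl[\int\!\!\int (y_1-y_2)^2\,K_{s,t}(x,dy_1)\,K_{s,t}(x,dy_2)\,;\ t\le\tau_s(x)\Bigr]=0,
\]
so $K_{s,t}(x)$ is a.s.\ a Dirac mass on $\{t\le\tau_s(x)\}$, necessarily at $x+W_t-W_s$ by the one-point identity. Some version of this (or of the LJR moment computation) is what your proof still needs.
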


\begin{proof} Let $W_{s,t}:=W_t-W_s$ for $0\le s\le t$. Fix $x>0$ and, for small $\epsilon\in]0,x[$, define $ \tau_s^{\epsilon}(x)=\inf\{u\ge s : x + W_{s,u} = \epsilon\}$. Then along the same lines of the proof of Lemma 3.1 in \cite{MR2235172}, one can show that   $K_{s,t}(x)=\delta_{x+W_{s,t}}$ for all $s\le t\le \tau_s^{\epsilon}(x)$. As $\epsilon>0$ is arbitrarily small, $K_{s,t}(x)=\delta_{x+W_{s,t}}$ also for $t\le \tau_s(x):=\inf\{u\ge s : x + W_{s,u} = 0\}$. Since this holds for $x$ arbitrarily distant from $0$, the lemma follows.
\end{proof}

In view of Lemma \ref{filt}, we may sometimes say $K$ is a solution of the generalized sticky equation without specifying the white noise since it is determined by $K$.

Assume now $(K,W)$ satisfies Definition \ref{def} and set
$$Q_t(f\otimes g)(x,w)=\EE[K_{0,t}f(x) g(w+W_t)]$$
By the previous lemma, $(Q_t)_t$ defines a Feller semigroup. Denote by $\mathcal L$ its generator and $\mathcal D(\mathcal L)$ its domain. A simple application of It\^o's formula shows that
$\mathcal D_1\otimes C^2_K(\mathbb R)\subset \mathcal D(\mathcal L)$ where $\mathcal D_1=\{f\in\mathcal D(\mathcal A) : f'(0+)=0\}$ and $C^2_K(\mathbb R)$ denotes the space of $C^2$ functions on $\RR$ with compact supports. Moreover for all $f\in \mathcal D_1$ and $g\in C^2_K(\mathbb R)$,
$$\mathcal L(f\otimes g)(x,w)=\frac{1}{2} f(x) g''(w) + \frac{1}{2} g(w) f''(x) + f'(x) g'(w).$$
Let $(X,B)$ be the Markov process associated to $(Q_t)_t$ and started from $(x,0)$. Then $X$ is a $\theta$-sticky Brownian motion started from $x$ and $B$ is a standard Brownian motion started from $0$.
Now for $f\in \mathcal D_1$ and $g\in C^2_K(\mathbb R)$,
\begin{equation}\label{martinga}
f(X_t) g(B_t) - \int_{0}^{t} \mathcal L(f\otimes g)(X_s,B_s)\ \text{is a martingale.}
\end{equation}
 As $X$ is a $\theta$-sticky Brownian motion, it satisfies $X_t=x + M_t + \theta \int_{0}^{t} 1_{\{X_s=0\}} ds$ with $M$ a martingale with quadratic variation $\langle M\rangle_t = \int_{0}^{t} 1_{\{X_s>0\}} ds$. Writing It\^o's formulas for $f(X) g(B)$ and using (\ref{martinga}) shows that
\begin{equation}  \label{apparg}
\int_{0}^{t} f'(X_s) g'(B_s) d\langle M, B\rangle_s = \int_{0}^{t} f'(X_s) g'(B_s) ds.
\end{equation}
Now, one can find a sequence $(f_n)\subset \mathcal D_1$  such that $f_n'(x)\to 1_{(0,\infty)}(x)$  as $n\to \infty$ for each $x>0$ and $\sup_{x}f'_n(x)\le 1$. On the other hand, there exists a sequence $(g_n) \subset C^2_K(\mathbb R)$ such that the support of $g_n$ is $[-n,n]$  with $\sup_{x} g_n'(x)\le 1$ and $g_n'(x) \to 1 $ as $n\to \infty$ for each $x\in \mathbb R$. In view of (\ref{apparg}), it follows from bounded convergence theorem that
$1_{\{X_s>0\}} d\langle M, B\rangle_s = 1_{\{X_s>0\}} ds$ since the integrals $\int_{0}^{t} f_n'(X_s) g_n'(B_s) ds$ are bounded by $t$, for each $t>0$. Consequently, $M_t = \int_{0}^{t} 1_{\{X_s>0\}} dB_s$ is in $L^2(\mathbb P)$. So finally, $(X,B)$ is a weak solution to the sticky equation.

More generally, considering the semigroups $Q^n_t(f\otimes g)(x,w)=\EE[K^{\otimes n}_{0,t}f(x) g(w+W_t)]$ for $n\ge 1$, one can prove that there exists a one to one correspondence between the laws of stochastic flows of kernels satisfying Definition \ref{def} and compatible weak solutions to the sticky equation (see Proposition 2.1 in \cite{MROP} for more details in a similar context).

To close this subsection, we mention that if $\varphi$ is a flow of mappings such that $K=\delta_{\varphi}$ satisfies Definition \ref{def}, then necessarily $\varphi$ is a flow of mappings solution of
$$\varphi_{s,t}(x) = x + \int_{s}^{t} 1_{\{\varphi_{s,u}(x)>0\}} dW_u + \theta \int_{s}^{t} 1_{\{\varphi_{s,u}(x) = 0\}} ds$$
and vice versa. Warren \cite{EJP120} proved that such a flow $\varphi$ exists and its law is uniquely determined. This flow can also be constructed by applying the general results of \cite{MR2060298}.

\subsection{The Wiener flow}
\begin{definition}
Let $(K,W)$ be a solution of the generalized sticky equation. If $\mathcal F^K_{s,t}\subset \mathcal F^W_{s,t}$ for all $s\le t$, then $(K,W)$ is called a Wiener solution.
\end{definition}
The Wiener solution was introduced in \cite{MR1905858} (it is called statistical solution there) by means of its Wiener chaos expansion with respect to $W$ only depending on the semigroup of the diffusion (sticky Brownian motion here). Interestingly, this solution exists and is unique under weak assumptions.
\begin{proposition}
Let $(K^1,W)$ and $(K^2,W)$ be two Wiener solutions of the sticky equation relatively to the same Brownian motion. Then for all $s\le t$ and $x\in\mathbb R$, with probability one, $K^1_{s,t}(x)=K^2_{s,t}(x)$.
\end{proposition}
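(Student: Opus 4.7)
The plan is to show that any Wiener solution $(K,W)$ of the sticky equation admits a Wiener chaos decomposition with respect to $W$ whose kernels are determined entirely by the sticky Brownian motion semigroup $(P_t)$, and therefore must coincide for $K^1$ and $K^2$. Fix $s\le t$, $x\in\RR_+$ and $f\in\cD(\cA)$. Because $(K^i,W)$ is a Wiener solution, $K^i_{s,t}f(x)$ is $\cF^W_{s,t}$-measurable and bounded in $L^2$ by $\|f\|_\infty$, so it admits an expansion $K^i_{s,t}f(x)=\sum_{n\ge 0}I_n(h^i_n)$ with symmetric kernels $h^i_n\in L^2_{\mathrm{sym}}([s,t]^n)$, where $I_n$ denotes the $n$-fold iterated Wiener integral against $dW$ over $[s,t]$.

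I would first take expectations in the defining SDE of Definition \ref{def}: the resulting deterministic Cauchy problem matches the one for the sticky Brownian motion semigroup (using Lemma \ref{filt} for a local identification, and the Feller semigroup $Q_t$ introduced after Definition \ref{def} to pin down the full action), forcing $h^i_0=P_{t-s}f(x)$ independently of $i$. For higher $n$, I would project both sides of the SDE onto the $(n+1)$-st chaos of $W_{s,t}$: the It\^o integral against $dW_u$ raises chaos degree by one while the Lebesgue drift preserves it, yielding a Duhamel-type recursion that iterates into the schematic formula
$$h^i_n(u_1,\dots,u_n;x,f)=P_{u_1-s}\,D\,P_{u_2-u_1}\,D\cdots D\,P_{t-u_n}f(x),\qquad s<u_1<\cdots<u_n<t,$$
with $Df:=f'1_{(0,\infty)}$. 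Since this depends only on $P$ and $D$, and not on $i$, Parseval's identity in $L^2(\cF^W_{s,t})$ gives $\EE[(K^1_{s,t}f(x)-K^2_{s,t}f(x))^2]=0$.

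To upgrade from fixed $f\in\cD(\cA)$ to the equality of the probability kernels $K^i_{s,t}(x)$, I would take $f$ in a countable subset of $\cD(\cA)$ dense in $C_0(\RR_+)$, use the continuity properties (v)--(vi) of Definition \ref{defsfk} to extend the identity to all $f\in C_0(\RR_+)$, and invoke the separability of $C_0(\RR_+)$. The main technical obstacle lies in the iteration step: the functions produced by differentiation, namely $Df$ and $f''$, are not in general members of $\cD(\cA)$, so the SDE cannot be applied to them literally. The remedy I envisage is to carry out the iteration at the level of chaos projections, where only bounded Borel test functions are required, and to extend the defining equation to bounded Borel $g$ by Feller approximation; this is justified by the uniform bound $\|K^i_{s,t}g\|_\infty\le\|g\|_\infty$ together with the $L^2$ continuity built into Definition \ref{defsfk}.
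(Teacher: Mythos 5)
Your route is genuinely different from the paper's. The paper reduces the claim to weak uniqueness of the pair (sticky Brownian motion, driving Brownian motion): it forms the tensor flow $K^1_{s,t}(x)\otimes K^2_{s,t}(y)$, reads off from the associated Feller semigroup a triple $(X^1,X^2,B)$ in which both coordinates solve the sticky SDE driven by the same $B$, and then shows by an induction on finite-dimensional marginals that $K^i_{0,t}f(x)$, viewed as a measurable functional of the noise, equals $\EE[f(X^i_t)\mid\sigma(B_u,u\le t)]$; since $(X^1,B)$ and $(X^2,B)$ have the same law, the two functionals coincide. Your argument instead characterizes any Wiener solution through its Wiener chaos kernels, which is the Le Jan--Raimond ``statistical solution'' viewpoint that the paper cites and essentially carries out in Section 3 for the explicit flow. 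Both strategies are legitimate; the paper's avoids all regularity analysis of the semigroup, while yours would deliver the explicit expansion of Proposition \ref{prop2} as a by-product.

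As written, however, your iteration step has a genuine gap. First, the defining equation cannot be ``extended to bounded Borel $g$ by Feller approximation'': its right-hand side involves $g'1_{(0,\infty)}$ and $g''$, which are meaningless for Borel $g$ and not controlled by uniform approximation of $g$, so there is nothing to pass to the limit. Second, projecting the SDE directly onto the $(n+1)$-st chaos does not close the recursion: the drift contributes $\frac12\int_s^t\pi_{n+1}\bigl(K_{s,u}f''(x)\bigr)\,du$, a term of the \emph{same} chaos order but with a different test function. The standard repair --- the one implicitly at work in Section 3 --- is to apply the equation to $P_{t-u}f$ rather than to $f$, so that the drift cancels against $\partial_u P_{t-u}f=-\cA P_{t-u}f$ and one obtains $K_{s,t}f(x)=P_{t-s}f(x)+\int_s^t K_{s,u}(DP_{t-u}f)(x)\,dW_u$, which does lower the chaos order upon projection. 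Iterating this requires exactly the smoothing statement of Lemma \ref{lemWChaos} ($P_tf\in\cD(\cA)$ and $DP_tf\in\cS$ for $f\in\cS$), not an extension to Borel test functions, and it also presupposes the identification $\EE[K_{s,t}f(x)]=P_{t-s}f(x)$, for which the generator computation of Section 2.1 is needed. With those ingredients supplied your proof closes; with the remedy you actually propose, the key step fails.
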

\begin{proof}
We follow the proof of Proposition 4.2 \cite{MROP}. Note that $K_{s,t}(x,y)=K^1_{s,t}(x)\otimes K^2_{s,t}(y)$ is a stochastic flow of kernels on $\RR_+^2$ and
$$Q_t(f\otimes g\otimes h)(x,y,w):=\EE[K^1_{0,t}f(x)K^2_{0,t}g(y)h(w+ W_t)]$$
is a Feller semigroup on $(\RR_+)^2\times \RR$. Fix $x\in\RR_+$ and let $(X^1,X^2,B)$ be the Markov process associated to $Q$ started from $(x,x,0)$, then $B$ is a standard Brownian motion and $X^1, X^2$ are two $\theta$-sticky Brownian motions. Moreover $X^1, X^2$ are solutions of the sticky equation driven by $B$ and in particular $(X^1,B)$ and $(X^2,B)$ have the same law. Since $K^1$ and $K^2$ are two Wiener solutions, there exist two measurable functions $F^1_{t,x}, F^2_{t,x}:C([0,t],\RR)\rightarrow\mathcal P(\RR)$ such that $K^1_{0,t}(x)=F^1_{t,x}(W_u, u\le t), K^2_{0,t}(x)=F^2_{t,x}(W_u, u\le t)$. Let $N^1_{0,t}(x) = F^1_{t,x}(B_u, u\le t)$ and $N^2_{0,t}(x)=F^2_{t,x}(B_u, u\le t)$. We will prove that for all measurable bounded $f:\RR\rightarrow\RR$ a.s.
\begin{equation}\label{yes}
N^i_{0,t}f(x) = \EE[f(X^i_t)|\sigma(B_u, u\le t)],\ \ i=1,2
\end{equation}
To prove (\ref{yes}), we will check  by induction on $n$ that for all $t_1\le\cdots\le t_{n-1}\le t_n=t$ and all bounded functions $f, g_1,\cdots,g_n:\RR\rightarrow\RR$, we have
\begin{equation}\label{rassoul}
\EE\big[K^i_{0,t} f(x)\prod_{j=1}^{n} g_j(W_{t_j})\big]=\EE\big[f(X^i_t)\prod_{j=1}^{n} g_j(B_{t_j})\big], \ \ \ i=1,2.
\end{equation}
Let us prove this for $i=1$ and set $Q^1_t(f\otimes g) = Q_t(f\otimes \text{Id}\otimes g)$. For $n=1$, (\ref{rassoul}) is immediate from the definition of $Q$. Let us prove (\ref{rassoul}) for $n=2$. We have
$$\EE[K^1_{0,t} f(x)g_1(W_{t_1})g_2(W_{t})] = \EE[K^1_{0,t_1}(Q^1_{t - t_1}(f\otimes g_2)(\cdot,W_{t_1}))(x) g_1(W_{t_1})].$$
On the other hand
$$\EE[f(X^1_t)g_1(B_{t_1})g_2(B_{t})] = \EE[Q^1_{t - t_1}(f\otimes g_2)(X^1_{t_1},B_{t_1})g_1(B_{t_1})].$$
Now (\ref{rassoul}) holds using a uniform approximation of $Q^1_{t_2 - t_1}(f\otimes g)$ by a linear combination of functions of the form $h\otimes k$, $h, k\in C_0(\RR_+)$. It is clear now from (\ref{yes}), that $N^1_{0,t}(x) = N^2_{0,t}(x)$ since $(X^1,B)$ and $(X^2,B)$ have the same law.

\end{proof}
Now in the rest of the paper, we take $W$ a real white noise and will check that $K$ defined in (\ref{ftt}) is the Wiener solution of the generalized sticky equation. This gives Theorem (\ref{ty}) in view of what precedes.

\begin{proposition}
$K$ is the, unique up to modification, Wiener stochastic flow of kernels solution of the generalized sticky equation driven by $W$.
\end{proposition}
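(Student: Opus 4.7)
Uniqueness among Wiener solutions of the generalized sticky equation has just been established in the preceding proposition, so the task reduces to verifying existence: to show that the candidate $K$ defined by (\ref{ftt}) is (i) a stochastic flow of kernels on $\RR_+$, (ii) adapted to the filtration generated by $W$, and (iii) a solution of (\ref{trt}). Point (ii) is immediate from the explicit expression, since both $\varphi_{s,t}(x)$ and the map $f\mapsto G_f$ are measurable functionals of $W$ (the exponential variable $T$ appearing in the definition of $G_f$ is integrated out and never couples to $W$), giving $\cF^K_{s,t}\subset\cF^W_{s,t}$.

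For the flow-of-kernels axioms, stationarity, independence of disjoint increments and the Feller-type continuities are inherited from the analogous properties of the Brownian functionals $\varphi_{s,t}$ and $W^+_{s,t}$, together with the uniform bound $\|G_f\|_\infty\le\|f\|_\infty$ and continuity of $y\mapsto G_f(y)$. The cocycle (iv), $K_{s,u}=K_{s,t}K_{t,u}$, reduces to the cocycle of the flow of mappings $\varphi$ on $\{t\le\tau_s(x)\}$, and on $\{t>\tau_s(x)\}$ to a short computation in which the coalescence of the reflected Brownian flow (after hitting $0$, $\varphi_{s,\cdot}$ and $\varphi_{t,\cdot}$ both trace $W^+$) combines with the memoryless property of the exponential distribution to collapse the double exponential appearing in $K_{s,t}K_{t,u}$ into the single $G_f$ expression of $K_{s,u}$.

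The main step is (iii). On $\{t\le\tau_s(x)\}$ with $x>0$ the kernel is $\delta_{x+W_{s,t}}$, the indicator $1_{(0,\infty)}$ is redundant, and the identity is just It\^o's formula applied to $f(x+W_{s,\cdot})$. On $\{t>\tau_s(x)\}$ the strong Markov property of $W$ at $\tau_s(x)$ reduces the claim to $x=0$ and $\tau_s(0)=s$, in which case $K_{s,t}f(0)=G_f(W^+_{s,t})$. Differentiating the explicit formula
\begin{equation*}
G_f(y)=2\theta e^{-2\theta y}\int_0^y f(u)e^{2\theta u}\,du+f(0)e^{-2\theta y}
\end{equation*}
yields $G_f'(y)=2\theta(f(y)-G_f(y))$, hence $G_f'(0^+)=0$ and $G_f'=G_{f'1_{(0,\infty)}}$ on $\RR_+$. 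Writing $W^+_{s,\cdot}=W_{s,\cdot}+\ell$ with $\ell$ continuous nondecreasing supported on $\{W^+=0\}$ and applying It\^o's formula to $G_f(W^+_{s,\cdot})$, the $d\ell$ contribution vanishes because $G_f'(0^+)=0$, giving
\begin{equation*}
G_f(W^+_{s,t})=f(0)+\int_s^t G_f'(W^+_{s,u})\,dW_u+\tfrac12\int_s^t G_f''(W^+_{s,u})\,du,
\end{equation*}
which is (\ref{trt}) as soon as one identifies $G_f''=G_{f''}$ on $\RR_+$.

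The crux of the argument, and the only place where the specific domain $\cD(\cA)$ enters, is this last identity. A direct computation gives $G_f''(y)-G_{f''}(y)=e^{-2\theta y}\bigl(2\theta f'(0^+)-f''(0^+)\bigr)$, which vanishes precisely under the sticky boundary relation $f''(0^+)=2\theta f'(0^+)$ built into $\cD(\cA)$; this is the main obstacle. Everything else is a routine combination of It\^o's formula with standard Brownian computations.
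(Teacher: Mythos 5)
Your proposal is correct and follows the same overall skeleton as the paper: reduce to existence (uniqueness being the preceding proposition), check the flow axioms with the cocycle as the only nontrivial one, note $\cF^K_{s,t}\subset\cF^W_{s,t}$ from the explicit formula, and verify the SDE by applying It\^o's formula to $G_f(W^+_{s,\cdot})$ together with the two intertwining identities $G_{f'1_{(0,\infty)}}=(G_f)'$ and $G_{f''}=(G_f)''$, the second of which is exactly where the boundary condition $f''(0^+)=2\theta f'(0^+)$ enters (your formula $G_f''-G_{f''}=e^{-2\theta y}\bigl(2\theta f'(0^+)-f''(0^+)\bigr)$ matches the paper's integration-by-parts computation, and your observation $G_f'=2\theta(f-G_f)$, hence $G_f'(0^+)=0$, is the same fact the paper uses to kill the local-time term). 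The one place where you genuinely diverge is the cocycle identity on $\{t>\tau_s(x)\}$, which reduces to showing $G_{K_{t,u}f}(y)=G_f(\varphi_{t,u}(y))$ a.s.\ for all $y\ge 0$: the paper proves this by a calculus argument, differentiating $z\mapsto e^{\lambda z}G_f(\varphi_{t,u}(z))$ and identifying the derivative with $\lambda e^{\lambda z}K_{t,u}f(z)$, whereas you invoke the memorylessness of the exponential variable $T$ to collapse the iterated kernel (conditioning on whether $T$ exceeds $y+\min_{t\le v\le u}W_{t,v}$, the residual of $T$ is again exponential and reproduces $G_f$ evaluated at $W^+_{t,u}$). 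Your route is more probabilistic and arguably explains \emph{why} the exponential law is the right one, while the paper's is a shorter mechanical verification; both are valid, though your version is only sketched and would need the explicit case split on $\{T\le y+\min_{t\le v\le u}W_{t,v}\}$ written out to be complete.
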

\begin{proof}
To check that $K$ is a stochastic flow of kernels, we will only check the flow property for all $f\in C_0(\mathbb R_+)$, $s\le t\le u$ and $x\in\RR_+$, with probability $1$,
\begin{equation}\label{flow}
K_{s,u}f(x)=K_{s,t}K_{t,u}f(x)
\end{equation}

The other claims in Definition \ref{defsfk} are easy to verify. Let us now check (\ref{flow}). For this, we will use the fact that $\varphi$ defined in (\ref{ref}) is a stochastic flow of mappings (for non specialists of stochastic flows, this is a rather simple exercise). Note that $G_f$ writes as (with $\lambda=2\theta$)
\begin{equation}\label{h}
G_f(y)=f(0) e^{-\lambda y} + \lambda e^{-\lambda y} \int_{0}^{y} f(u) e^{\lambda u} du
\end{equation}
We first check (\ref{flow}) for $x=0$. By the flow property of $\varphi$, $K_{s,u}f(0)=G_f(\varphi_{s,u}(0))=G_f(\varphi_{t,u}\circ \varphi_{s,t}(0))$ and $K_{s,t}K_{t,u}f(0)=G_{K_{t,u}f}(\varphi_{s,t}(0))$. Using the independence of increments of $\varphi$, it suffices to prove that for all $y\ge 0$, a.s $G_f(\varphi_{t,u}(y))=G_{K_{t,u}f}(y)$ which is equivalent to
$$e^{\lambda y} G_f(\varphi_{t,u}(y))=G_f(\varphi_{t,u}(0))  + \lambda \int_{0}^{y} K_{t,u}f(a) e^{\lambda a} da$$
To prove this identity, note that for all $y>0$, $z\mapsto \varphi_{t,u}(z)$ is differentiable at $y$ with derivative given by $1_{\{u<\tau_t(y)\}}$. Thus by a simple calculation, the derivative of $z\mapsto e^{\lambda z}\varphi_{t,u}(z)$ at $y$ coincides with $\lambda e^{\lambda y} K_{t,u}f(y)$. This proves (\ref{flow}) for $x=0$.

Now take $x>0$ and let $y=\varphi_{s,t}(x)$.

On the event $\{u\le \tau_s(x)\}$, we have $\tau_s(x)=\tau_t(y)$, $K_{s,u}f(x)=f(\varphi_{s,u}(x))$, $K_{s,t} (K_{t,u}f)(x)=(K_{t,u}f) (y)= f(\varphi_{t,u}(y))$ since $u\le \tau_t(y)$ and so (\ref{flow}) holds by the flow property of $\varphi$.

On the event $\{t\le \tau_s(x)<u\}$, we still have $\tau_s(x)=\tau_t(y)$ and $K_{s,u}f(x)=G_f(\varphi_{s,u}(x))=G_f(\varphi_{t,u}(y))$. Moreover $K_{s,t} (K_{t,u}f)(x)=G_{K_{t,u}f}(y)$ and so the flow property holds by the calculations above.

On the event $\{\tau_s(x)\le t\}$, we have $K_{s,u}f(x)=G_f(\varphi_{s,u}(x))=G_f(\varphi_{s,u}(0))=K_{s,u}f(0)$. Moreover $K_{s,t} (K_{t,u}f)(x)=G_{K_{t,u}f}(y)=G_{K_{t,u}f}(\varphi_{s,t}(0))=K_{s,t} (K_{t,u}f)(0)$ and the flow property holds again from the case $x=0$.

Thus $K$ is a stochastic flow of kernels. Note that $\mathcal F^K_{s,t}\subset \mathcal F^W_{s,t}$ for all $s\le t$. It remains now to check that $K$ solves the generalized equation.
We take $s=0$ and first $x=0$. Denote $W^+_{0,t}$ simply by $W^+_t$. Let $$\mathcal D=\{g\in C^2(0,\infty):g\in C_0([0, \infty)), g'(0^+)=0, g''(0^+) \mbox{ exists}\}$$
By It\^o's formula, for all $g\in \mathcal D$
$$g(W^+_t)=g(0) + \int_{0}^{t} g'(W^+_u)dW_u + \frac{1}{2} \int_{0}^{t} g''(W^+_u) du $$
Let $f\in \cD(\cA)$ and set $g(y)=G_f(y)$. Then $g$ is continuous on $\mathbb R_+$, $C^2$ on $\mathbb R^{\ast}_+$ and
\begin{equation}\label{x}
g'(y)=-\lambda f(0) e^{-\lambda y} - \lambda^2 e^{-\lambda y} \int_{0}^{y} f(u) e^{\lambda u} du + \lambda f(y)
\end{equation}
In particular $g'(0+)=0$. Moreover
$$g''(y)=\lambda^2 f(0) e^{-\lambda y} + \lambda^3 e^{-\lambda y} \int_{0}^{y} f(u) e^{\lambda u} du - \lambda^2 f(y) + \lambda f'(y)$$
and so $G_f\in\mathcal D$. Consequently for all $f\in \cD(\cA)$,
\begin{eqnarray}
G_f(W^+_t)&=&G_f(0) + \int_{0}^{t} (G_f)'(W^+_u)dW_u + \frac{1}{2} \int_{0}^{t} (G_f)''(W^+_u) du \nonumber\\
&=&f(0) + \int_{0}^{t} (G_f)'(W^+_u)dW_u + \frac{1}{2} \int_{0}^{t} (G_f)''(W^+_u) du \nonumber\
\end{eqnarray}
We now check that for all $f\in \cD(\cA)$ and $y\ge 0$,
$$G_{f'1_{(0,\infty)}}(y) = (G_f)'(y)\ \text{and}\ \ G_{f''}(y) = (G_f)''(y)$$
Using (\ref{h}), we see that
$$G_{f'1_{(0,\infty)}}(y)=\lambda e^{-\lambda y} \int_{0}^{y} f'(u) e^{\lambda u} du$$
which is also equal to $(G_f)'(y)$ given in (\ref{x}) by a simple integration by parts. Again from (\ref{h}), we have
$$G_{f''}(y)=f''(0) e^{-\lambda y} + \lambda e^{-\lambda y} \int_{0}^{y} f''(u) e^{\lambda u} du$$
Integrating twice by parts, we see that
$$G_{f''}(y)=f''(0) e^{-\lambda y} - \lambda f'(0) e^{-\lambda y }+ \lambda f'(y) - \lambda^2 f(y) + \lambda^2 f(0) e^{-\lambda y} + \lambda^3 e^{-\lambda y} \int_{0}^{y} f(u) e^{\lambda u} du $$
which is the same as $(G_f)''(y)$ using the hypothesis $f''(0) = \lambda f'(0)$ as $f\in \cD(\cA)$.
Finally for all $f\in \cD_\cA$,
$$G_f(W^+_t)=f(0) + \int_{0}^{t} G_{f'1_{(0,\infty)}}(W^+_u)dW_u + \frac{1}{2} \int_{0}^{t} G_{f''}(W^+_u) du $$
or equivalently
$$K_{0,t}f(0)=f(0) + \int_{0}^{t} K_{0,u} (f'1_{(0,\infty)})(0) dW_u + \frac{1}{2} \int_{0}^{t} K_{0,u} f''(0) du$$
Now the case $x>0$ holds by discussing $t\le \tau_0(x)$ and $t>\tau_0(x)$ and using the fact that $K_{0,t}(x)=K_{0,t}(0)$ for $t\ge \tau_0(x)$.
\end{proof}
\vspace{0.2cm}



\section{Wiener chaos expansion}

When the canonical flow is filtered with respect to  $\cF^W$, it can be be expanded into a series of iterated Wiener integrals, see e.g. \cite[pg.57]{MR2060298} and \cite{MR1905858}. In this section, we derive the Wiener chaos expansion of $\mathbb{E}[f(X_t)|\cF^W]$ using the semigroup $P$ of the sticky Brownian motion.   This semigroup can be obtained explicitly by the inverse Laplace transform of the resolvent \cite[Prop.13]{Warren1997} for $x,y \in \mathbb{R}_+ $ and $t> 0$ as
\begin{equation} \label{St}
P_t(x,dy) = p_t(x,y)\, dy-p_t(x,-y)\, dy+2g_t(x+y)\, dy+\frac{1}{\theta}\: g_t(x) \,\delta_0(dy)
\end{equation}
where $p_t(x,\cdot)$ is the probability density function of a Gaussian random variable with mean $x$ and variance $t$, and
\[
g_t(x)=\theta \exp(2\theta x + 2\theta^2 t) \:\mbox{erfc} \left(\frac{x}{\sqrt{2t}}+\theta \sqrt{2t}\right)
\]
with $\mbox{erfc} (x) = \frac{2}{\sqrt{\pi}} \int_x^\infty e^{-y^2} dy$ \cite[Cor.3.11]{kost}.
The following lemma will be useful for deriving an equation to iterate for Wiener chaos expansion. Let $\cS:=\{f:[0,\infty)\to  \RR : f\in C^2(0,\infty), f(0^+), f'(0^+)\mbox{ and } f''(0^+) \mbox{ are finite},  \lim_{x\to \infty}f(x)=0\}$.
\begin{lemma} If $f\in \cS$, then $P_tf\in \cD(\cA)$  and $(P_t f)'1_{(0,\infty)} \in \cS$ for each $t> 0$.  \label{lemWChaos}
\end{lemma}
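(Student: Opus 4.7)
The plan is to prove both claims by direct computation from the explicit kernel (\ref{St}). The linchpin is the elementary identity
\[
g_t'(x) \;=\; 2\theta\, g_t(x) \;-\; 2\theta\, p_t(0,x), \qquad x\in\RR,
\]
obtained by differentiating $g_t(x)=\theta e^{2\theta x+2\theta^2 t}\mathrm{erfc}(x/\sqrt{2t}+\theta\sqrt{2t})$: the $\mathrm{erfc}'$ contribution is $-\tfrac{2}{\sqrt{\pi}}e^{-u^2}$ with $u=x/\sqrt{2t}+\theta\sqrt{2t}$, and $u^2$ cancels the prefactor $2\theta x+2\theta^2 t$ in the exponential, leaving exactly $-2\theta p_t(0,x)$. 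Iterating expresses each $g_t^{(k)}$ as a linear combination of $g_t$ and derivatives of $p_t(0,\cdot)$; in particular $g_t$ is smooth on all of $\RR$ and decays exponentially at $+\infty$.

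Next I would decompose
\[
P_tf(x) \;=\; \int_0^\infty f(y)\bigl[p_t(x,y)-p_t(x,-y)\bigr]dy \;+\; 2\int_0^\infty f(y)g_t(x+y)\,dy \;+\; \frac{f(0)}{\theta}g_t(x).
\]
Since $f$ is bounded and $p_t,g_t$ together with all $x$-derivatives have Gaussian-type bounds uniform in $y\ge 0$, dominated convergence justifies differentiation under the integral to any order. This yields $P_tf\in C^\infty((0,\infty))$ with finite one-sided limits of $(P_tf)^{(k)}$ at $0^+$ for every $k$, and $(P_tf)^{(k)}(x)\to 0$ as $x\to\infty$ (exponential decay of $g_t$, plus $p_t(x,\pm y)\to 0$ uniformly in $y\ge 0$ as $x\to\infty$). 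Membership $P_tf\in C_0([0,\infty))$ is the Feller property already built into the structure of $P$.

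The heart of the proof --- and the main obstacle --- is the sticky boundary condition $(P_tf)''(0^+)=2\theta(P_tf)'(0^+)$. I would compute both one-sided derivatives in terms of the four scalars $f(0)$, $\int_0^\infty f(y)p_t(0,y)dy$, $\int_0^\infty y f(y)p_t(0,y)dy$, $\int_0^\infty f(y)g_t(y)dy$, using the key identity (and its once-iterated form $g_t''=2\theta g_t'-2\theta\partial_x p_t(0,\cdot)$) whenever a $g_t'$ or $g_t''$ appears. The atom $\frac{f(0)}{\theta}g_t(x)$ produces a boundary contribution $2f(0)[g_t(0)-p_t(0,0)]$ in the first derivative, which, after multiplication by $2\theta$, matches the boundary terms generated by one integration by parts in the $g_t$-integral of the second derivative (using $\partial_x g_t(x+y)=\partial_y g_t(x+y)$). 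Tracing the cancellations once through closes the identity; everything else is bookkeeping.

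For $(P_tf)'1_{(0,\infty)}\in\cS$, the $C^2$ regularity of $(P_tf)'$ on $(0,\infty)$ and its decay at $+\infty$ are already established above, so the only new point is the finiteness of $(P_tf)'''(0^+)$. This follows from one further differentiation of the decomposition together with one more application of the key identity to control $\partial_x^3 g_t$; no additional relation among the right limits is required since $\cS$ imposes none.
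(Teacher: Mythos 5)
Your proposal is correct and follows essentially the same route as the paper: differentiate the explicit kernel (\ref{St}) under the integral sign, and verify the sticky boundary condition at $0^+$ through the identity tying the derivatives of $g_t$ to $g_t$ and the Gaussian density --- your relation $g_t'(x)=2\theta g_t(x)-2\theta p_t(0,x)$ is exactly the integrated form of the identity $g_t''(x)=2\theta\,g_t'(x)+2\theta\,(x/\sqrt{2\pi t^3})e^{-x^2/2t}$ that the paper uses, and since the two $p_t''$ contributions cancel at $x=0$, the bookkeeping closes as you claim. One small repair: $p_t(x,y)=p_t(0,y-x)$ does \emph{not} tend to $0$ uniformly in $y\ge 0$ as $x\to\infty$ (it equals $p_t(0,0)$ at $y=x$), so the vanishing at infinity of $\int_0^\infty p_t(x,y)f(y)\,dy$ and of its $x$-derivatives must instead be obtained, as in the paper, by the change of variables $u=y-x$ together with $\lim_{z\to\infty}f(z)=0$ (which you have, since $f\in\cS$) and dominated convergence.
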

\begin{proof} Note that $p_t(x,y)=p_t(0,y-x)$ and  $p_t(x,-y)=p_t(0,x+y)$ and let $p_t'$ denote the derivative of $p_t(0,\cdot)$. For $x\geq 0$, we have
\begin{eqnarray*}
\lefteqn{(P_t f)'(x)=-\int_0^\infty p_t'(0,y-x)f(y)\, dy -\int_0^\infty p_t'(0,x+y)f(y)\, dy  }\\ & \displaystyle{\quad \quad \quad \quad +2\int_0^\infty g_t'(x+y)f(y)\, dy + \frac{1}{\theta}\, g_t'(x)f(0)}
\end{eqnarray*}
and
\begin{eqnarray*}
\lefteqn{(P_t f)''(x)=\int_0^\infty p_t''(0,y-x)f(y)\, dy -\int_0^\infty p_t''(0,x+y)f(y)\, dy  }\\ & \displaystyle{\quad \quad \quad \quad +2\int_0^\infty g_t''(x+y)f(y)\, dy + \frac{1}{\theta}\, g_t''(x)f(0)}\; .
\end{eqnarray*}
By a change of variable $y-x$ to $y$  in the first integral in the expression for $(P_t f)'$ above, we get
\begin{eqnarray*}
\lefteqn{(P_t f)'(x)=-\int_{-x}^\infty p_t'(0,y)f(y+x)\, dy -\int_x^\infty p_t'(0,x+y)f(y)\, dy  }\\ & \displaystyle{\quad \quad \quad \quad + 2\int_x^\infty g_t'(y)f(y-x)\, dy + \frac{1}{\theta}\, g_t'(x)f(0)}\; .
\end{eqnarray*}
Then, $\lim_{x\rightarrow \infty } (P_t f)'(x)=0$ since  $f$, $p'_t$, and $g'_t $ all vanish at infinity, $f$ is bounded, and the third integral above also goes to 0 as $x \to \infty$.  Moreover, $(P_t f)'(0)$,
 %
$(P_t f)''  (0)$, $(P_t f)'''  (0)$ are all finite since $p_t$, $g_t$ and their derivatives are continuous and bounded. It follows that  the function $(P_t f)'1_{(0,\infty)} $, and its first and second derivatives are all finite at $0^+$. Hence, $(P_t f)'1_{(0,\infty)}  \in \cS$.

On the other hand, one can easily verify that $$g''_t(x)=2\theta\, g'_t(x)+2\theta\, (x/\sqrt{2\pi  t^3})\exp(-x^2/ {2t})\; .$$ In view of this and the identity $p'_t(0,y)=(-y/t)p_t(0,y)$,  we get $(P_t f)''(0+)=2 \theta \, (P_t f)'(0+)$. The other properties in $\cD(\cA)$ are also satisfied by $P_t f$ and  $P_t f \in \cD(\cA)$ follows.
\end{proof}

\begin{proposition}  \label{prop2} For $f \in \cS$, we have
\[
\mathbb{E}[f(X_t)|\cF^W]=P_tf(0)+\sum_{n=1}^{\infty} J^n_t f
\]
where
$$ J^n_t f=\int_{0<s_1<\ldots<s_n<t}P_{s_1}(D(P_{s_2-s_1}\ldots D(P_{t-s_n}f)))(0)\, dW_{s_1}\ldots dW_{s_n}$$
and  $Dg=1_{(0,\infty)} g'$.
\end{proposition}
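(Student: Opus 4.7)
Since $\EE[f(X_t)|\cF^W]=K_{0,t}f(0)$ (Theorem \ref{ty} in its Wiener flow form) and $\EE[K_{0,t}f(x)]=P_tf(x)$ by (\ref{tg}) with $n=1$, the plan is to establish the one-step representation
$$K_{0,t}f(0) = P_tf(0) + \int_0^t K_{0,u}(DP_{t-u}f)(0)\, dW_u, \qquad f\in\cS,$$
where $Dg=g'1_{(0,\infty)}$, and to iterate it. Each iteration extracts one further chaos term $J^n_tf$, after which a chaos-orthogonality argument forces the remainder to vanish in $L^2$.

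To obtain the one-step identity, apply a time-dependent It\^o calculation to $u\mapsto K_{0,u}(P_{t-u}f)(0)$. Lemma \ref{lemWChaos} ensures $P_{t-u}f\in\cD(\cA)$ for every $u<t$, so (\ref{trt}) governs the $u$-increment at frozen argument, producing both the martingale term $K_{0,u}(DP_{t-u}f)(0)\,dW_u$ and a drift $\tfrac12 K_{0,u}\bigl((P_{t-u}f)''\bigr)(0)\,du$; the Kolmogorov backward equation $\partial_s P_sf=\cA P_sf$ governs the argument-increment at frozen $u$ and contributes $-K_{0,u}(\cA P_{t-u}f)(0)\,du$. For $g\in\cD(\cA)$ the boundary condition $g''(0^+)=2\theta g'(0^+)$ forces $\tfrac12 g''=\cA g$ on all of $[0,\infty)$, so the two drifts cancel and the identity follows. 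I would justify the time-dependent step by a Riemann-sum partition in $u$, splitting each increment into a $u$-variation piece (apply (\ref{trt})) and an argument-variation piece (apply the backward equation) and passing to the limit, using property (v) of Definition \ref{defsfk} for the required $L^2$-continuity.

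Since Lemma \ref{lemWChaos} also gives $DP_{t-u}f\in\cS$, the one-step identity reapplies to the integrand $K_{0,u}(DP_{t-u}f)(0)$. Iterating $N$ times produces
$$K_{0,t}f(0) = P_tf(0) + \sum_{n=1}^{N-1} J^n_tf + R^N_t,$$
where $R^N_t$ is the $N$-fold iterated It\^o integral obtained by leaving $K_{0,s_1}(\cdots)(0)$ in place of $P_{s_1}(\cdots)(0)$ at the innermost level. Because $K_{0,s_1}(\cdots)(0)$ is $\cF^W_{0,s_1}$-measurable (the Wiener property of $K$), successive adapted It\^o integrations push $R^N_t$ into the closed span of Wiener chaos of order $\ge N$; in particular $R^N_t$ is orthogonal in $L^2$ to every chaos of order $<N$. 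The Pythagorean identity
$$\|P_tf(0)\|_2^2+\sum_{n=1}^{N-1}\|J^n_tf\|_2^2+\|R^N_t\|_2^2 \;=\; \|K_{0,t}f(0)\|_2^2\;\le\;\|f\|_\infty^2$$
then forces $\|R^N_t\|_2\to 0$, and passing to the limit yields the asserted expansion.

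The main obstacle is the rigorous execution of the time-dependent It\^o step leading to the one-step identity, since (\ref{trt}) is stated only for a fixed $f\in\cD(\cA)$ while here the argument $P_{t-u}f$ depends on $u$. The required continuity of $u\mapsto P_{t-u}f$ (in a norm compatible with $\cD(\cA)$, which follows from standard semigroup theory and the explicit form (\ref{St})) and of $u\mapsto K_{0,u}g(0)$ in $L^2$ is available, so the partition argument should go through without new ingredients beyond what the paper already contains.
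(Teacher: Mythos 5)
Your proposal is correct in substance, but it reaches the key one-step identity by a genuinely different route than the paper. The paper applies It\^o's formula pathwise to $H(s,X_s)=P_{t-s}f(X_s)$ along the weak solution $X$ of (\ref{eq}), cancels the drift $\cA(P_{t-u}f)$ against $\frac{d}{du}P_{t-u}f=-\cA(P_{t-u}f)$ exactly as you do, obtains $f(X_t)=P_tf(0)+\int_0^t DP_{t-u}f(X_u)\,dW_u$, and only then conditions on $\cF^W$, invoking a cited lemma to interchange conditional expectation with the stochastic integral; the iteration via Lemma \ref{lemWChaos} and the identification with the $L^2$ chaos decomposition are then the same as yours. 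You instead run the computation directly at the level of the Wiener flow, applying (\ref{trt}) to $u\mapsto K_{0,u}(P_{t-u}f)(0)$. What your route buys is that it bypasses the conditioning/interchange step entirely (and the need for a progressively measurable modification of $\EE[DP_{t-u}f(X_u)|\cF^W]$); what it costs is exactly the obstacle you flag, namely upgrading (\ref{trt}) from fixed $f\in\cD(\cA)$ to the time-dependent family $P_{t-u}f$, which the paper never has to do. Your Riemann-sum sketch for this is the right mechanism and is fillable with the continuity available from (\ref{St}); note only that, as in the paper, one must work on $[0,s]$ with $s<t$ and let $s\uparrow t$, since $P_0f=f\in\cS$ need not lie in $\cD(\cA)$. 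Two small points of rigor: your drift cancellation tacitly uses $\cA g=\tfrac12 g''$ at $x=0$ via the boundary condition $g''(0^+)=2\theta g'(0^+)$, which is fine but worth stating; and the Pythagorean inequality alone only bounds $\sum_n\|J^n_tf\|_2^2$ --- to conclude $\|R^N_t\|_2\to0$ you should observe that each $J^n_tf$ is an iterated integral of a deterministic kernel, hence \emph{is} the $n$-th chaos projection of $K_{0,t}f(0)$, and then invoke completeness of the Wiener chaos decomposition (as the paper does via the reference to Revuz--Yor). Neither point is a genuine gap.
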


\begin{proof}
Let $H(s,x):=P_{t-s}f(x)$ for  $0<s<t$, $x\geq 0$ and $f\in \cS$, and recall that
\begin{eqnarray*}
\lefteqn{P_{t-s} f (x)=\int_0^\infty p_{t-s}(0,y-x)f(y)\, dy -\int_0^\infty p_{t-s}(0,x+y)f(y)\, dy  }\\ & \displaystyle{\quad \quad \quad \quad +2\int_0^\infty g_{t-s}(x+y)f(y)\, dy + \frac{1}{\theta}\, g_{t-s}(x)f(0)} \; .
\end{eqnarray*}
By continuity of $p$ and $g$, it follows that $H$ is differentiable when $f$ is measurable and bounded, in particular when $f\in \cS$. By It\^{o}'s formula for $H(s,X_s)$, we get
\begin{eqnarray*}
H(s,X_s)&=& H(0,0)+\int_0^s \frac{\partial}{\partial u}H(u,X_u)\, du   \\
& &  +\int_0^s \frac{\partial}{\partial x}H(u,X_u)\, dX_u+\frac{1}{2}\int_0^s \frac{\partial^2}{\partial x^2} H(u,X_u)\, d\langle X_u\rangle  \\
 &=& H(0,0)+\int_0^s \left[ (P_{t-u}f)'(X_u) \right] 1_{\{X_u>0\}} \, d W_u  \\ & & +\int_0^s \cA(P_{t-u}f)(X_u)\, du  + \int_0^s \left[\frac{d}{d u}P_{t-u}f\right](X_u)\, du
\end{eqnarray*}
where we have identified $\cA$  as $P_{t-u} f \in \cD(\cA)$ by Lemma \ref{lemWChaos}.
Then, the sum of the last two terms is 0 because for a transition semigroup $P$
\[
\frac{d}{dt} P_t f =  \lim_{v\to 0} \frac{1}{v}[P_{t+v}f -P_tf]  = \lim_{v\to 0} \frac{1}{v}[P_v(P_tf)-P_tf]     = \cA(P_t f)
\]
 by definition of the generator of an infinitesimal semigroup;
in particular,  $\frac{d}{du} P_{t-u}f =-\cA(P_{t-u} f)$. So, we have
\[
H(s,X_s) = H(0,0)+\int_0^s \left[ (P_{t-u}f)'(X_u) \right] 1_{\{X_u>0\}} \, d W_u   \; .
\]
By letting $s\uparrow t$, it follows that
\[
f(X_t)=P_t f(0)+ \int_0^t (P_{t-u}f)'(X_u) 1_{\{X_u>0\}} \, d W_u \; .
\]
 By conditioning with respect to $\cF^W$ and interchanging conditional expectation and integration (see e.g. \cite[Lem.4.7]{MR2235172}), we get
\begin{equation}\label{tr}
\mathbb{E}[f(X_t)|\cF^W]=P_t f(0) + \int_{0}^{t} \mathbb{E}[(P_{t-u} f)'1_{(0,\infty)} (X_u)|\cF^W] \, dW_u \; .
\end{equation}
Note that since the integrand is adapted and  the quadratic variation of $W$ is an absolutely continuous function of $t$, trivially,  the stochastic integral can be defined uniquely in almost sure sense (see e.g. \cite[Rem.3.2.11]{karat}). That is, we can work with a  progressively measurable modification of $\mathbb{E}[(P_{t-u} f)' 1_{(0,\infty)}(X_u)|\cF^W]$.

Let the Wiener chaos expansion of $\mathbb{E}[f(X_t)|\cF^W]$ be given by
\[
\mathbb{E}[f(X_t)|\cF^W]=P_t f(0)+\sum_{n=1}^{\infty}J_t^n f
\]
which exists in $L^2$ sense \cite[pg.202]{revuz}.
Now, in view of Lemma \ref{lemWChaos} $(P_t f)'1_{(0,\infty)} \in \cS$ and we can iterate Equation (\ref{tr}) to get
\[
J^1_t f = \int_0^t \int_{(0 ,\infty)} P_s(0,dx) (P_{t-s}f)'(x) \: dW_s
\]
and similarly
$$ J^n_t f=\int_{0<s_1<\ldots<s_n<t}P_{s_1}(D(S_{s_2-s_1}\ldots D(P_{t-s_n}f)))(0)\, dW_{s_1}\ldots dW_{s_n}$$
where  $Dg=1_{(0,\infty)} g'$.
\end{proof}



Note that Proposition \ref{prop2} uniquely characterizes the conditional law of $X_t$ given $W$ since $\cS$ is dense in $C_0([0,\infty))$. It also gives the Wiener chaos expansion for $G_f (W^+_ t )$ as well by Theorem \ref{ty}.  We can alternatively consider the semigroup for $W^+$, denoted by $P^+$ to obtain an expansion for $G_f(W_t^+)$. By similar calculations as above,  we find
 \begin{equation}  \label{WienerP+}
 G_f(W_t^+)=P^+_tG_f(0) + \sum_{n=1}^\infty J^{n+}_t f
\end{equation}
where
\[
J^{n+}_t f =\int_{0<s_1<\ldots<s_n<t}P^+_{s_1}(D^+(P^+_{s_2-s_1}\ldots D^+(P^+_{t-s_n}f)))(0) \, dW_{s_1}\ldots dW_{s_n}
\]
and $D^+g(x)= g'(x)$. Then,  the Wiener chaos expansions of $\mathbb{E}[f(X_t)|\cF^W]$ and $G_f(W^+_t)$ must be  equal. In particular,
$P_t f(0)=P^+_tG_f(0)$, and $J^{n}=J^{n+}$ for $n\ge 1$.
Vice versa, showing directly the equality of the Wiener chaos expansions of $\mathbb{E}[f(X_t)|W]$ and $G_f(W^+_t)$ would be an alternative approach to verify  the conditional law.  Instead, our proof in this paper has drawn upon the broader perspective of the generalized equation satisfied by flows induced by the sticky equation and the Wiener flow is completely described.


\end{document}